\documentclass[12pt]{amsart}

\usepackage{amsmath,amsthm,amssymb,amsfonts}
\usepackage[shortlabels]{enumitem}
\usepackage[colorlinks=true,citecolor=red,linkcolor=blue]{hyperref}
\usepackage[left=2.7cm,right=2.7cm,top=2.5cm,bottom=2.5cm]{geometry}

\theoremstyle{plain}
\newtheorem{thm}{Theorem}[section]
\newtheorem{prop}[thm]{Proposition}
\newtheorem{lemma}[thm]{Lemma}
\newtheorem{cor}[thm]{Corollary}

\theoremstyle{remark}
\newtheorem{remark}[thm]{Remark}

\newcommand{\N}{\mathbb N}
\newcommand{\Z}{\mathbb Z}
\newcommand{\C}{\mathbb C}
\newcommand{\norm}[1]{\left\|#1\right\|}
\newcommand{\abs}[1]{\left|#1\right|}

\begin{document}

\title[Positive strongly Kreiss bounded operators]
{Positive strongly Kreiss bounded operators on $L^p$-spaces have logarithmic power growth}

\author[L. Arnold]{Loris Arnold}
	
	\address[L. Arnold]{Normandie Univ, UNICAEN, CNRS, LMNO, 14000 Caen, France}
	\email{lfj.arld@gmail.com}

\subjclass[2020]{47A30, 47A35, 47B65}
\keywords{strongly Kreiss operator, positive operator, $L^p$-space,
power growth, Banach lattice}

\begin{abstract}
Let $1<p<\infty$ and let $T$ be a positive strongly Kreiss bounded
operator on an $L^p$-space. We prove that there exist constants
$C,\kappa>0$ such that
\[
\norm{T^N}\leq C\bigl(\log(N+1)\bigr)^\kappa,
\qquad N\geq1.
\]
This answers a question raised by Arnold and Cuny concerning positive
strongly Kreiss bounded operators on $L^p$-spaces. It also shows that
the polynomial exponent appearing in the corresponding problem of
Deng, Lorist and Veraar has infimum zero. The proof is elementary:
positivity yields a local $\ell^p$-estimate for the orbit on intervals
of length of order $\sqrt N$; a positive vector-valued lifting of $T$
then produces a self-improving estimate which can be iterated down to
the logarithmic scale.
\end{abstract}

\maketitle


\section{Introduction}

Let $X$ be a Banach space and let $T\in B(X)$. Recall that $T$ is
\emph{strongly Kreiss bounded} if there exists a constant $C>0$ such
that
\[
\norm{(\lambda I-T)^{-m}}
\leq
\frac{C}{(\abs{\lambda}-1)^m},
\qquad
\abs{\lambda}>1,\quad m\geq1.
\]
We denote by $C_{SK}(T)$ the infimum of all such constants $C > 0$. Equivalently, by a result of Nevanlinna, 
\begin{equation}\label{eqSK}
\norm{e^{zT}}
\leq
C_{SK}(T)e^{\abs z},
\qquad z\in\C.
\end{equation}
We shall use \eqref{eqSK} throughout the paper.

The growth of the powers of strongly Kreiss bounded operators has been
studied extensively. On arbitrary Banach spaces, Lubich and Nevanlinna
proved the general estimate
\[
\norm{T^N}=O(\sqrt N),
\]
and showed that this order of growth is optimal; see
\cite{LubichNevanlinna}. On Hilbert spaces the situation is radically
different. Cohen, Cuny, Eisner and Lin proved that every strongly
Kreiss bounded operator satisfies
\begin{equation*}
\norm{T^N}
=
O\bigl((\log(N+1))^\kappa\bigr)
\end{equation*}
for some $\kappa>0$; see \cite[Theorem 4.5]{CCEL}.

More recently, Cuny and the author obtained a sharp result on arbitrary
$L^p$-spaces (see \cite{ArnoldCuny}). If $1<p<\infty$ and $T$ is strongly Kreiss bounded on
$L^p$, then
\begin{equation*}
\norm{T^N}
\leq
C N^{\left|\frac12-\frac1p\right|}
\bigl(\log(N+1)\bigr)^\kappa.
\end{equation*}
Moreover, the polynomial exponent
$\left|\frac12-\frac1p\right|$ is optimal, see \cite[Proposition 1.2.]{ArnoldCuny}. In the same paper, it was also shown that positive strongly Kreiss bounded operators on AL-spaces and AM-spaces satisfy a logarithmic power growth estimate of the form $\norm{T^N} =O((\log(N+1))^{\kappa})$.

The positivity assumption leads to substantially better estimates.
Cuny and the author proved that if $T$ is positive and strongly Kreiss
bounded on $L^p$, then
\[
\norm{T^N}
\leq
C N^{1/\overline p}\bigl(\log(N+1)\bigr)^\kappa,
\qquad
\overline p:=\max\{p,p'\},
\]
(see \cite[Proposition 5.2.]{ArnoldCuny}) and asked whether the remaining polynomial factor could be removed
altogether; see \cite[Question 5.8]{ArnoldCuny}. More precisely, they
asked whether positivity implies an estimate of the form
\[
\norm{T^N}
\leq
C\bigl(\log(N+1)\bigr)^\kappa.
\]

Deng, Lorist and Veraar subsequently developed a general geometric
framework for strongly Kreiss bounded operators on UMD spaces. Among
other results, they obtained improved estimates for positive operators
on $p$-convex and $q$-concave Banach lattices; see
\cite[Theorem 4.6]{DLV}. In the particular case of positive operators
on $L^p$, they explicitly asked for the smallest polynomial exponent
which can occur; see \cite[Problem 5.6]{DLV}.

The aim of this note is to answer the question of Cuny and the author
affirmatively.

\begin{thm}\label{thmMain}
Let $1<p<\infty$, let $(\Omega,\mu)$ be a $\sigma$-finite measure
space and let
$T \in B(L^p(\Omega))$
be a positive strongly Kreiss bounded operator. Then there exist
constants $C,\kappa>0$, depending only on $p$ and $C_{SK}(T)$, such that
\begin{equation*}
\norm{T^N}_{B(L^p(\Omega))}
\leq
C\bigl(\log(N+1)\bigr)^\kappa,
\qquad N\geq1.
\end{equation*}
\end{thm}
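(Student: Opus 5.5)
We follow the route indicated in the abstract: first use positivity to obtain a local $\ell^p$-estimate for the orbit on windows of length $\sqrt N$, then lift $T$ to a positive operator on an $\ell^p$-valued space to get a self-improving inequality, and finally iterate that inequality down to the logarithmic scale.

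\textbf{Step 1: local estimate from positivity.} Put $z=N$ in \eqref{eqSK}. Positivity of $T$ and of $f\geq0$ gives, pointwise,
\[
e^{-N}\sum_{k\ge0}\frac{N^k}{k!}T^kf\le e^{-N}e^{NT}f .
\]
Hence $\norm{e^{-N}\sum_k \frac{N^k}{k!}T^k f}_p\le C_{SK}(T)\norm f_p$ for $f\ge0$. On a window $k\in I_N:=[N,N+\sqrt N]$ the Poisson weights are all $\asymp N^{-1/2}$. For $f\ge0$ this gives
\[
\Bigl\|\sum_{k\in I_N}T^kf\Bigr\|_p\lesssim \sqrt N\,\norm f_p .
\]
Because the $T^kf$ are nonnegative, $\sum_{k\in I_N}|T^kf|^p\le\bigl(\sum_{k\in I_N} T^kf\bigr)^p$. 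Applying this to $f$ and to $|f|$, and using $|T^kf|\le T^k|f|$, we want a local square/$\ell^p$ function bound of the form
\[
\Bigl(\sum_{k\in I_N}\norm{T^kf}_p^p\Bigr)^{1/p}\lesssim a_N\,\norm f_p ,
\]
with $a_N$ first as large as $\sqrt N$. An interpolation variant, pairing with $T^*$, which is also positive and strongly Kreiss bounded on $L^{p'}$, should improve this. This is what already underlies the $N^{1/\overline p}$ bound of \cite{ArnoldCuny}.

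\textbf{Step 2: vector-valued lifting.} Define $\widetilde T$ on $L^p(\Omega;\ell^p_m)=L^p(\Omega\times\{1,\dots,m\})$ by $\widetilde T(f_j)_j=(Tf_j)_j$, or more usefully by a positive operator mixing $T$ with shifts. Such an operator is again positive, and it is strongly Kreiss bounded with the same constant, since $e^{z\widetilde T}$ acts coordinatewise and $L^p(\ell^p)$ norms decouple. Now apply Step 1 to $\widetilde T$ acting on the vector $(T^{j}f)_{j\in J}$ for a block $J$ of length $\sqrt N$. This converts the estimate at scale $N$ into one at scale roughly $\sqrt N$. Writing $M(N):=\sup_{n\le N}\norm{T^n}$, the target is a recursion of the form
\[
M(N)\le C\,M(\sqrt N)+C',
\]
or more generally $M(N)\le C\,M(N^{1/2})$. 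Iterating $j\approx\log_2\log N$ times until the scale is $O(1)$ gives $M(N)\le C^{\log\log N}=(\log N)^{\log_2 C}$. This is exactly the claimed bound with $\kappa=\log_2 C$, and $\kappa$ depends only on $p$ and $C_{SK}(T)$.

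\textbf{Main obstacle.} The hard step is deriving the self-improving inequality. One must bound $T^N f$ by averages over the window $I_N$, using $T^N=T^{N-k}T^k$ with $N-k\le\sqrt N$. The averaging loss $\sqrt N$ from Step 1 must then be cancelled exactly by the $\ell^p$ gain from the lift, so that only a constant factor $C$, and no power of $N$, survives each iteration. I would first try a duality argument: pair $T^Nf$ with $g\in L^{p'}$, split as $\langle T^kf,(T^*)^{N-k}g\rangle$, average over $k\in I_N$, and apply H\"older in $k$. The factor $(\sqrt N)^{1/p}(\sqrt N)^{1/p'}=\sqrt N$ is then absorbed by the averaging normalisation $1/\sqrt N$. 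What remains is $\ell^p$-orbit bounds at scale $\sqrt N$ for $T$ and $\ell^{p'}$-orbit bounds at scale $\sqrt N$ for $T^*$, both controlled by $M(\sqrt N)$ through the lifted operators.
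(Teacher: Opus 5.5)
Your architecture matches the paper's: a positivity-based $\sqrt N$-window estimate, a shift-type lift that transfers scale $N$ to scale $\sqrt N$, and $\log_2\log N$ iterations. However, the quantity you iterate on is the wrong one, and the recursion $M(N)\le C\,M(\sqrt N)$ with $M(N)=\sup_{n\le N}\|T^n\|$ does not follow from your sketch. Averaging $\langle T^kf,(T^*)^{N-k}g\rangle$ over a window $W\subset[N-L,N]$ with $L\asymp\sqrt N$ and applying H\"older gives
\[
|\langle T^Nf,g\rangle|\le\frac1L\Bigl(\sum_{k\in W}\|T^kf\|_p^p\Bigr)^{1/p}\Bigl(\sum_{j=0}^{L}\|(T^*)^jg\|_{p'}^{p'}\Bigr)^{1/p'} .
\]
Grant even the best lifted estimate, which controls the window at $N$ by the initial segment $\sum_{j\le 4\sqrt N}\|T^jf\|_p^p$. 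Bounding both orbit sums by operator norms still yields $M(N)\lesssim M(4\sqrt N)\,M(\sqrt N)$, as your own phrase ``both controlled by $M(\sqrt N)$'' indicates. That recursion is quadratic, and iterating it $\log_2\log N$ times only gives $M(N)\le N^{O(1)}$. The quantity that satisfies a \emph{linear} recursion is the orbit average $\Phi(N):=\sup_{\|x\|_p\le1}\frac1N\sum_{n=1}^N\|T^nx\|_p^p$. The lift gives $\Phi(N)\le C\max_{m\le 4\sqrt N+4}\Phi(m)$; in the paper this appears as $\beta\mapsto(\beta+1)/2$ for the exponent in $\sum_{n\le N}\|T^nx\|_p^p\le AN^\beta$. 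Hence $\Phi(N)\lesssim(\log N)^\kappa$. One proves the same for $T^*$ on $L^{p'}$ and uses duality only once, at the end, averaging over all $1\le k\le N-1$.

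The lifted estimate itself, which you flag as the main obstacle, is also not carried out. It needs the specific lift $\mathbf T=U\otimes T$, $(\mathbf TF)_j=Tf_{j-1}$ on $\ell^p(\Z;L^p)$; the diagonal lift gives nothing. Its Kreiss bound does not follow from ``acting coordinatewise'', because $e^{z\mathbf T}$ mixes coordinates. It uses positivity through Krivine's calculus: $|(U^k\otimes T^k)F|_{\ell^p}\le T^k|F|_{\ell^p}$ pointwise, hence $|e^{z\mathbf T}F|_{\ell^p}\le e^{|z|T}|F|_{\ell^p}$. The gain then comes from coherent summation. Take $Y=(T^jx)_{1\le j\le m}$ with $m=\lceil 4\sqrt N\rceil$, and $n$ in the terminal window $[N+2-2\sqrt N,N]$. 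Then $(e^{N\mathbf T}Y)_n=\bigl(\sum_{1\le n-k\le m}N^k/k!\bigr)T^nx$. The bracket contains $\gtrsim\sqrt N$ Poisson weights, each $\gtrsim e^N/\sqrt N$, so it is $\ge c\,e^N$. This removes the $N^{p/2}$ loss of your Step 1 and gives $\sum_{\text{window}}\|T^nx\|_p^p\lesssim\sum_{j\le m}\|T^jx\|_p^p$. Your Step 1 window-sum bound, applied to $\mathbf T$ and this $Y$ with $x\ge0$, would also work for the same coherence reason, but that computation is the heart of the proof and your proposal leaves it as a heuristic.
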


It is worth stressing that logarithmic growth cannot in general be
replaced by power boundedness. Indeed, the positive weighted backward
shifts considered in \cite{CCEL} (see Proposition 4.9.) provide, for every
$\gamma>0$, positive strongly Kreiss bounded operators satisfying
\[
\norm{T^N}\asymp(\log(N+1))^\gamma.
\]


\section{Preliminary estimates}

We begin by recalling the form of the Krivine calculus which will be
used below.

\begin{lemma}[Krivine calculus]\label{lemKrivine}
Let $X$ be a Banach lattice, let $S:X\to X$ be positive and let
$1\leq r<\infty$. Then, for every finite family
$x_1,\ldots,x_m\in X$,
\begin{equation*}
\left(
\sum_{j=1}^m \abs{Sx_j}^r
\right)^{1/r}
\leq
S\left(
\left(
\sum_{j=1}^m\abs{x_j}^r
\right)^{1/r}
\right).
\end{equation*}
\end{lemma}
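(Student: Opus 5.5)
The plan is to prove the inequality first for a positive homogeneous expression and then pass to general elements, using Krivine's functional calculus. For any Banach lattice $X$ and any $x_1,\dots,x_m\in X$, the element $\left(\sum_j\abs{x_j}^r\right)^{1/r}$ is defined through the Krivine calculus. It satisfies the dual representation
\[
\left(\sum_{j=1}^m\abs{x_j}^r\right)^{1/r}
=\sup\Bigl\{\sum_{j=1}^m a_j x_j:\ a\in\mathbb{Q}^m,\ \norm{a}_{\ell^{r'}}\leq1\Bigr\}.
\]
Here $r'$ is the conjugate exponent, and the supremum is taken in the lattice sense; it exists and equals the Krivine expression. This follows because the functional calculus is a lattice homomorphism from positively homogeneous continuous functions on $\mathbb R^m$ into the principal ideal generated by $\sum_j\abs{x_j}$. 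In the function-lattice case the identity is the scalar duality $\norm{(t_j)}_{\ell^r}=\sup_{\norm a_{r'}\le1}\sum a_jt_j$. To make it rigorous, I would use a countable dense set of $a$'s, and I would approximate the $\ell^r$-norm by finite maxima of linear forms uniformly on the unit sphere, which gives uniform convergence in the ideal norm. For $r=1$ the same holds with $\norm a_\infty\le1$.

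\textbf{Key step.} Fix $a$ with $\norm a_{r'}\le1$. By linearity and positivity of $S$,
\[
\sum_j a_j Sx_j = S\Bigl(\sum_j a_jx_j\Bigr)\le S\Bigl(\Bigl(\sum_j\abs{x_j}^r\Bigr)^{1/r}\Bigr),
\]
since $\sum_j a_jx_j\le(\sum\abs{x_j}^r)^{1/r}$ and $S$ is order preserving. Taking the supremum over $a$ and using the dual representation for the family $(Sx_j)$ gives the claim.

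To avoid relying on an infinite supremum, I would instead use the finite approximation directly. For each $\varepsilon>0$ there are finitely many $a^{(1)},\dots,a^{(K)}$ with $\norm{a^{(k)}}_{r'}\le1$ such that, for all $t\in\mathbb R^m$,
\[
(1-\varepsilon)\norm t_r\le\max_k\sum_j a^{(k)}_jt_j\le\norm t_r .
\]
Applying the calculus for $(Sx_j)$, which respects pointwise inequalities between positively homogeneous functions, yields
\[
(1-\varepsilon)\Bigl(\sum\abs{Sx_j}^r\Bigr)^{1/r}\le\bigvee_k\sum_ja^{(k)}_jSx_j .
\]
Each term on the right is at most $S((\sum\abs{x_j}^r)^{1/r})$, again by the calculus for $(x_j)$ and positivity of $S$. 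Letting $\varepsilon\to0$ and using that the positive cone is closed finishes the proof.

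\textbf{Main obstacle.} The only delicate point is that the Krivine calculus preserves pointwise inequalities and finite lattice operations, which is the defining homomorphism property. In the paper's setting $X=L^p$, this is immediate because all the expressions are pointwise a.e. functions, so this obstacle disappears there.
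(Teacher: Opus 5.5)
Your argument is correct and follows the same route as the paper, which gives no proof of its own and simply calls the inequality a standard consequence of the Krivine functional calculus (Lindenstrauss--Tzafriri, Theorem 1.d.1). You fill in that derivation: the finite approximation $(1-\varepsilon)\|t\|_r\le\max_k\sum_j a^{(k)}_jt_j\le\|t\|_r$ by norming functionals, the lattice-homomorphism property of the calculus for both families $(x_j)$ and $(Sx_j)$, positivity of $S$, and closedness of the positive cone.
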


This is a standard consequence of the Krivine functional calculus;
see \cite[Theorem 1.d.1]{LT}.

We shall also use the following elementary consequence of Stirling's
formula (see for example \cite[Lemma 3.4]{CCEL}).

\begin{lemma}\label{lemStirling}
There exist constants $c_{exp}>0$ and $N_0\geq1$ such that, for every
integer $N\geq N_0$ and every integer $k$ satisfying
\[
N-4\sqrt N\leq k\leq N,
\]
one has
\begin{equation*}
\frac{N^k}{k!}
\geq
c_{exp}\frac{e^N}{\sqrt N}.
\end{equation*}
\end{lemma}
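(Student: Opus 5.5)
The plan is to compare $N^k/k!$ with the top term $N^N/N!$ and control the ratio by an explicit product. Throughout, $j:=N-k$ denotes an integer with $0\leq j\leq 4\sqrt N$.

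\emph{Step 1 (the term $k=N$).} I would use the elementary upper Stirling bound
\[
N!\leq e\sqrt N\,(N/e)^N,
\]
valid for all $N\geq1$. It follows, for instance, from comparing $\log N!=\sum_{i\le N}\log i$ with $\int_1^N\log t\,dt$ plus the trapezoidal correction. Rearranging gives
\[
\frac{N^N}{N!}\geq \frac{1}{e}\,\frac{e^N}{\sqrt N}.
\]

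\emph{Step 2 (the ratio).} For $k=N-j$ the ratio telescopes:
\[
\frac{N^k/k!}{N^N/N!}=\frac{N!}{(N-j)!\,N^{j}}=\prod_{i=0}^{j-1}\Bigl(1-\frac iN\Bigr).
\]
I would take $N_0=64$. Then $j\leq 4\sqrt N\leq N/2$, so every factor has $i/N\leq 1/2$. On this range the inequality $\log(1-x)\geq -x-x^2$ holds for $0\leq x\leq 1/2$. Summing it over the factors gives
\[
\log\prod_{i=0}^{j-1}\Bigl(1-\frac iN\Bigr)\geq -\sum_{i<j}\frac iN-\sum_{i<j}\frac{i^2}{N^2}\geq -\frac{j^2}{2N}-\frac{j^3}{3N^2}.
\]
Using $j\leq4\sqrt N$, the first term is at least $-8$. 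The second is at least $-\tfrac{64}{3}N^{-1/2}\geq -\tfrac{8}{3}$ for $N\geq 64$. Hence the product is bounded below by $e^{-11}$.

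\emph{Step 3 (conclusion).} Combining Steps 1 and 2 gives
\[
\frac{N^k}{k!}\geq e^{-12}\,\frac{e^N}{\sqrt N},
\]
so one can take $c_{exp}=e^{-12}$ and $N_0=64$. There is no real obstacle. The only points needing care are the choice of $N_0$, so that all factors stay in the range where the logarithm bound applies, and the fact that $j$ is an integer bounded by $4\sqrt N$, not by $4\sqrt N$ rounded up.
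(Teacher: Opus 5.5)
Your proof is correct and complete. The bound $N!\le e\sqrt N\,(N/e)^N$ follows from concavity of $\log$: the trapezoidal sum underestimates $\int_1^N\log t\,dt=N\log N-N+1$. The telescoping identity for the ratio is right. The choice $N_0=64$ gives $4\sqrt N\le N/2$, so $k\ge 0$ and every $i/N\le 1/2$. Finally, $-\frac{j^2}{2N}-\frac{j^3}{3N^2}\ge -\frac{32}{3}$, which yields $c_{exp}=e^{-12}$.

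The paper does not prove this lemma. It quotes it (Lemma 3.4 of Cohen, Cuny, Eisner and Lin) as an elementary consequence of Stirling's formula. The usual Stirling route applies the formula to $k!$ directly, writing $N^k/k!\asymp k^{-1/2}\exp\bigl(k+k\log(N/k)\bigr)$. It then expands $k\log(N/k)$ with $k=N-j$ to second order, which puts $N-\frac{j^2}{2N}+O(j^3/N^2)$ in the exponent.

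You instead use a one-sided Stirling bound at the single point $N$ and cover the whole window with the exact product $\prod_{i<j}(1-i/N)$. This avoids applying Stirling at a varying index and gives explicit values of $c_{exp}$ and $N_0$. The direct Stirling route is quicker to state but gives the constants only asymptotically.
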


Our first estimate was already observed in
\cite[Lemma 5.3]{ArnoldCuny}. We include the short argument in the
form needed below.

\begin{lemma}\label{lemInitialLocal}
Let $1<p<\infty$ and let $T$ be a positive strongly Kreiss bounded
operator on $L^p(\Omega)$. Then there exists
$C_0>0$, depending only on $p$ and $C_{SK}(T)$, such that
\begin{equation*}
\sum_{N+2-2\sqrt N\leq n\leq N}
\norm{T^n x}_p^p
\leq
C_0 N^{p/2}\norm{x}_p^p
\end{equation*}
for all $N\geq1$ and $x\in L^p(\Omega)$.
\end{lemma}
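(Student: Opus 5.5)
We plan to reduce to the case $x\geq0$, which is enough since $\abs{T^n x}\leq T^n\abs{x}$ by positivity, and then compare each $T^n x$ with the single vector $e^{NT}x$. For $x\geq 0$ every term of the exponential series is positive, so for each $k\geq0$ we have the pointwise bound
\[
0\leq \frac{N^k}{k!}T^k x\leq \sum_{j\geq0}\frac{N^j}{j!}T^jx=e^{NT}x .
\]
By \eqref{eqSK} with $z=N$, the right-hand side has norm at most $C_{SK}(T)e^N\norm{x}_p$. Restricting to indices $k$ in the window $N-4\sqrt N\leq k\leq N$, Lemma~\ref{lemStirling} gives $N^k/k!\geq c_{exp}e^N/\sqrt N$ when $N\geq N_0$. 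Hence $T^kx\leq c_{exp}^{-1}\sqrt N\, e^{-N}e^{NT}x$ pointwise for all such $k$.

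Bounding each term of the window separately in norm would only give $\sum\norm{T^kx}_p^p\lesssim \sqrt N\cdot N^{p/2}\norm{x}_p^p$, which loses a factor $\sqrt N$. We therefore need a pointwise bound on the sum itself. Write $W=\{k\in\mathbb Z: N+2-2\sqrt N\leq k\leq N\}$ and $a_k=N^k/k!$. Using the trivial inequality $\sum\abs{y_k}^p\leq(\sum\abs{y_k})^p$, valid for $p\geq1$, we get pointwise
\[
\sum_{k\in W}(T^kx)^p\leq\Bigl(\sum_{k\in W}T^kx\Bigr)^p\leq \Bigl(\frac{\sqrt N}{c_{exp}e^N}\sum_{k\in W}a_kT^kx\Bigr)^p\leq \Bigl(\frac{\sqrt N}{c_{exp}e^N}\,e^{NT}x\Bigr)^p .
\]
The second inequality uses $a_k\geq c_{exp}e^N/\sqrt N$ on $W$, and the third uses positivity of all terms of the series. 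Integrating, we obtain
\[
\sum_{k\in W}\norm{T^kx}_p^p\leq c_{exp}^{-p}N^{p/2}e^{-Np}\norm{e^{NT}x}_p^p\leq c_{exp}^{-p}C_{SK}(T)^pN^{p/2}\norm{x}_p^p,
\]
which is exactly the claimed bound. The only thing to check here is that $W$ lies inside the Stirling window, and it does, since $N+2-2\sqrt N\geq N-4\sqrt N$. The main point of the argument is this replacement of a sum of norms by the norm of a sum, made possible by positivity; Lemma~\ref{lemKrivine} is not even needed at this stage.

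For general $x$ we apply the result to $\abs{x}$, using $\abs{T^kx}\leq T^k\abs{x}$ and $\norm{\abs{x}}_p=\norm{x}_p$. For small $N<N_0$ the window contains at most a bounded number of indices. In that range we use $\norm{T^k}\leq C_{SK}(T)\cdot(\text{const})$, which follows, for example, from the Lubich--Nevanlinna $O(\sqrt k)$ bound, or directly from the same pointwise argument with $N$ replaced by $k$ and a single term. Taking $C_0$ to be the maximum of the two constants finishes the proof, and $C_0$ depends only on $p$ and $C_{SK}(T)$.
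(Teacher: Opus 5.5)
Your proof is correct and is essentially the paper's argument: you reduce to $x\geq0$, bound the coefficients $N^n/n!$ on the window from below via Lemma~\ref{lemStirling}, use the pointwise chain $\sum_{n\in W}(T^nx)^p\leq\bigl(\sum_{n\in W}T^nx\bigr)^p\leq\bigl(\tfrac{\sqrt N}{c_{exp}e^N}e^{NT}x\bigr)^p$, then integrate and apply \eqref{eqSK}. Your explicit handling of $N<N_0$ via $\norm{T^k}\leq C_{SK}(T)e^k k!/k^k$ fills in the small-$N$ case that the paper passes over with ``$N$ sufficiently large''.
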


\begin{proof}
By positivity, assume $x\geq 0$. Lemma \ref{lemStirling} yields $N^n/n! \geq c_{exp} e^N/\sqrt{N}$ for $N+2-2\sqrt{N}\leq n\leq N$ and $N$ sufficiently large. Since $x\geq 0$,
\[
|e^{NT}x|^p 
= \left( \sum_{n=0}^\infty \frac{N^n}{n!}T^n x \right)^p 
\geq \frac{c_{exp}^p e^{pN}}{N^{p/2}} \sum_{N+2-2\sqrt{N}\leq n\leq N} |T^n x|^p.
\]
Integrating and applying \eqref{eqSK} gives the desired result.
\end{proof}

The next elementary lemma turns estimates on terminal
$\sqrt N$-windows into global orbit estimates.

\begin{lemma}\label{lemLocalGlobal}
Let $\alpha\geq0$ and $A>0$. Assume that
\begin{equation}\label{eqLocalGeneral}
\sum_{N+2-2\sqrt N\leq n\leq N}
\norm{T^n x}_p^p
\leq
A N^\alpha\norm{x}_p^p
\end{equation}
for every $N\geq1$ and every $x\in L^p(\Omega)$. Then
\begin{equation}\label{eqGlobalGeneral}
\sum_{n=1}^N\norm{T^n x}_p^p
\leq
2^{2\alpha+1} A
N^{\alpha+1/2}\norm{x}_p^p
\end{equation}
for every $N\geq1$ and every $x\in L^p(\Omega)$.
\end{lemma}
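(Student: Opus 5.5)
We cover $\{1,\dots,N\}$ by terminal windows of the form $W(M):=\{n: M+2-2\sqrt M\le n\le M\}$, with endpoints $M$ running downward from $N$. Summing the hypothesis \eqref{eqLocalGeneral} over these windows will give the global bound. Concretely, set $M_0:=N$ and, as long as $M_j\ge 1$, let $M_{j+1}$ be the largest integer strictly smaller than $M_j+2-2\sqrt{M_j}$. Then the windows $W(M_0),W(M_1),\dots$ consist of consecutive integers with no gaps. Their union covers $\{1,\dots,N\}$, since the process continues down to $M=1$ and $W(1)=\{1\}$.

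Next we count the windows. A window whose endpoint $M$ lies in the dyadic range $(2^{-k-1}N,2^{-k}N]$ contains at least about $2\sqrt M-2\ge c\sqrt{2^{-k-1}N}$ integers, at least when $M$ is not tiny. Hence the number of windows with endpoint in that range is at most about $2^{-k-1}N/\sqrt{2^{-k-1}N}=\sqrt{2^{-k-1}N}$, up to an additive constant. Each such window contributes at most $A M^\alpha\|x\|_p^p\le A(2^{-k}N)^\alpha\|x\|_p^p$ by \eqref{eqLocalGeneral}. Summing over $k\ge0$ gives
\[
\sum_{n=1}^N\|T^nx\|_p^p\le A\|x\|_p^p\sum_{k\ge0}\bigl(\sqrt{2^{-k}N}+1\bigr)(2^{-k}N)^\alpha .
\]
This is a geometric series dominated by the $k=0$ term, so it is of order $N^{\alpha+1/2}$.

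The only real obstacle is the bookkeeping needed to reach exactly the constant $2^{2\alpha+1}$, and in particular handling small $M$, where $2\sqrt M-2$ may be less than $1$. For $M<4$ the window may reduce to the single point $\{M\}$, or even be empty if $M+2-2\sqrt M$ is not an integer below $M$. There is a cleaner alternative that I would try first. Define the windows differently: for $n\in\{1,\dots,N\}$ use the window $W(M)$ with $M\in\{n,\dots,\min(N,n+\lfloor\sqrt n\rfloor)\}$. Then sum \eqref{eqLocalGeneral} over every $M=1,\dots,N$. Each $n$ lies in $W(M)$ for at least about $\sqrt n$ values of $M$ (roughly $M\in[n,n+\sqrt n]$ when $n+\sqrt n\le N$). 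This gives the averaged inequality
\[
\sum_{n\le N}\#\{M\le N: n\in W(M)\}\,\|T^nx\|_p^p\le A\|x\|_p^p\sum_{M\le N}M^\alpha\le AN^{\alpha+1}\|x\|_p^p .
\]
The multiplicity is at least $\sqrt n$ only for $n\le N-\sqrt N$. To cover the terminal range, apply the inequality with $N$ replaced by $2N$. Every $n\le N$ then has multiplicity at least $c\sqrt n$. This weight is still not uniform ($\sqrt n$ rather than $\sqrt N$), so the small $n$ are the problem. I would fix this with a dyadic decomposition. For $n\in(2^{-k-1}N,2^{-k}N]$, apply the averaged bound with $N$ replaced by $2^{-k+1}N$. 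This gives
\[
\sum_{n\in(2^{-k-1}N,2^{-k}N]}\|T^nx\|_p^p\lesssim A(2^{-k}N)^{\alpha+1/2}\|x\|_p^p .
\]
Summing over $k$ yields a geometric series, and tracking the factors of $2$ should produce the stated $2^{2\alpha+1}$, or a comparable constant, which suffices for the iteration in the sequel.
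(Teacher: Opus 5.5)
Your covering idea is sound, and if carried out it gives $\sum_{n=1}^N\|T^nx\|_p^p\le C_\alpha A N^{\alpha+1/2}\|x\|_p^p$ for some constant $C_\alpha$. However, the lemma asserts the specific constant $2^{2\alpha+1}$, and your proposal stops at ``should produce the stated $2^{2\alpha+1}$, or a comparable constant''. The dyadic bookkeeping you describe cannot reach it. Already the leading part of your displayed bound $\sum_{k\ge0}(\sqrt{2^{-k}N}+1)(2^{-k}N)^\alpha$ equals $N^{\alpha+1/2}/(1-2^{-\alpha-1/2})$. At $\alpha=0$ this is $(2+\sqrt2)\sqrt N$, while the lemma claims the factor $2$. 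Also, at $\alpha=0$ the ``$+1$'' terms are not a geometric series: you must truncate at $k\le\log_2 N$. The averaging alternative loses further constant factors through the doubling $N\mapsto 2N$ and the dyadic summation. Incidentally, $W(M)$ is never empty for $M\ge1$: since $M+2-2\sqrt M\le M$, one always has $M\in W(M)$.

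The missing observation is that windows ending at perfect squares tile exactly. Since $k^2+2-2k=(k-1)^2+1$, one has $W(k^2)=\{(k-1)^2+1,\dots,k^2\}$. Summing the hypothesis over $k=1,\dots,M$ with $M=\lceil\sqrt N\rceil$ gives
\[
\sum_{n=1}^N\|T^nx\|_p^p\le\sum_{n=1}^{M^2}\|T^nx\|_p^p\le A\sum_{k=1}^{M}k^{2\alpha}\|x\|_p^p\le AM^{2\alpha+1}\|x\|_p^p .
\]
Then $M\le 2\sqrt N$ yields exactly $2^{2\alpha+1}$; this is the paper's proof. Your greedy sequence can be repaired the same way. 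Because $t\mapsto t+2-2\sqrt t$ is nondecreasing on $[1,\infty)$, induction gives $M_j\le(\lceil\sqrt N\rceil-j)^2$. Hence there are at most $\lceil\sqrt N\rceil$ windows, and the $j$-th contributes at most $A(\lceil\sqrt N\rceil-j)^{2\alpha}\|x\|_p^p$, which is the same sum. For the later bootstrap, a constant bounded uniformly for $\alpha$ in a compact range would suffice, so your weaker version would not break the paper. Still, as written it does not prove the lemma with the stated constant.
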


\begin{proof}
Let $M\geq1$. For every $1\leq k\leq M$, applying
\eqref{eqLocalGeneral} with $N=k^2$ yields
\[
\sum_{(k-1)^2+1\leq n\leq k^2}
\norm{T^n x}_p^p
\leq
A k^{2\alpha}\norm{x}_p^p,
\]
since $k^2+2-2k=(k-1)^2+1.$ Therefore
\[
\sum_{n=1}^{M^2}\norm{T^n x}_p^p
\leq
A\sum_{k=1}^M k^{2\alpha}\norm{x}_p^p
\leq
A M^{2\alpha+1}\norm{x}_p^p.
\]
For arbitrary $N$, choose $M=\lceil\sqrt N\rceil$. Then
$N\leq M^2$ and $M\leq2\sqrt N$, which gives
\eqref{eqGlobalGeneral}.
\end{proof}

Combining Lemmas \ref{lemInitialLocal} and
\ref{lemLocalGlobal} gives the first global estimate.

\begin{cor}\label{corInitialGlobal}
There exists $C>0$ such that
for every $N\geq1$ and every $x\in L^p(\Omega)$,
\begin{equation*}
\sum_{n=1}^N\norm{T^n x}_p^p
\leq
CN^{(p+1)/2}\norm{x}_p^p.
\end{equation*}
\end{cor}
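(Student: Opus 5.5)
The corollary follows by feeding Lemma \ref{lemInitialLocal} into Lemma \ref{lemLocalGlobal}. Lemma \ref{lemInitialLocal} says that the terminal-window hypothesis \eqref{eqLocalGeneral} holds with $\alpha=p/2$ and $A=C_0$, where $C_0$ depends only on $p$ and $C_{SK}(T)$. The only point to watch is that this must hold for \emph{every} $N\geq1$, as Lemma \ref{lemLocalGlobal} requires. The Stirling bound of Lemma \ref{lemStirling} is only available for $N\geq N_0$. So before invoking Lemma \ref{lemLocalGlobal}, I would confirm that the constant $C_0$ in Lemma \ref{lemInitialLocal} already covers the finitely many small $N$.

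For the small values $1\leq N<N_0$ this is easy. The window $N+2-2\sqrt N\leq n\leq N$ contains at most $2\sqrt N\leq 2\sqrt{N_0}$ integers. Moreover each power is controlled by the Lubich--Nevanlinna bound $\norm{T^n}\leq K\sqrt n$ with $K$ depending only on $C_{SK}(T)$. More elementarily, one can read off $\norm{T^n}$ from the Cauchy formula $T^n=\frac{n!}{2\pi i}\oint e^{zT}z^{-n-1}\,dz$ on $\abs z=n$, combined with \eqref{eqSK}. Either way, the sum over the window is bounded by a constant depending only on $N_0$, $p$ and $C_{SK}(T)$. Enlarging $C_0$ if necessary therefore makes \eqref{eqLocalGeneral} hold for all $N\geq1$ with $\alpha=p/2$ and $A=C_0$.

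Lemma \ref{lemLocalGlobal} then gives
\[
\sum_{n=1}^N\norm{T^n x}_p^p\leq 2^{p+1}C_0\,N^{p/2+1/2}\norm{x}_p^p .
\]
This is the claimed bound with $C=2^{p+1}C_0$, since $p/2+1/2=(p+1)/2$. The constant depends only on $p$ and $C_{SK}(T)$. There is no real obstacle; the only step needing any care is the uniformity in $N$ just described.
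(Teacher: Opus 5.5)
Your proposal is correct and is the paper's argument: Lemma~\ref{lemInitialLocal} supplies \eqref{eqLocalGeneral} with $\alpha=p/2$ and $A=C_0$, and Lemma~\ref{lemLocalGlobal} then gives the bound with $C=2^{p+1}C_0$. Your additional check that the finitely many $N<N_0$ can be absorbed into $C_0$ is valid; you can use the Cauchy-formula bound $\norm{T^n}\leq eC_{SK}(T)\sqrt n$ or simply $\norm{T}\leq eC_{SK}(T)$. This fills in a detail that the paper leaves implicit in the proof of Lemma~\ref{lemInitialLocal}.
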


\section{The proof of the main Theorem}
\subsection{A positive vector-valued lifting}

Let $\mathcal X := L^p(\Omega;\ell^p(\Z)) \simeq \ell^p(\Z;L^p(\Omega))$ and let $U$ be the bilateral right shift $(Ua)_j=a_{j-1}$ on $\ell^p(\Z)$. Define $\mathbf T := U\otimes T$, so that $(\mathbf TF)_j = T(f_{j-1})$ for $F=(f_j)_{j\in\Z}\in\mathcal X$. Setting $\abs{F}_{\ell^p} := \left(\sum_{j\in\Z}\abs{f_j}^p\right)^{1/p}$, we have
\begin{equation}\label{eqFubini}
\norm{\abs{F}_{\ell^p}}_{L^p} = \norm{F}_{\mathcal X}.
\end{equation}
The next lemma shows that $\mathbf T$ is strongly Kreiss bounded with $C_{SK}(\mathbf{T}) \le C_{SK}(T)$.
\begin{lemma}\label{lemLifting}
If $T$ is strongly Kreiss bounded, then $\mathbf T$ satisfies
\begin{equation*}
\norm{e^{z\mathbf T}}_{B(\mathcal X)} \leq C_{SK}(T)e^{\abs z}, \qquad z\in\C.
\end{equation*}
\end{lemma}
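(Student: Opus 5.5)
The plan is to compute $e^{z\mathbf T}$ explicitly and observe that it acts coordinatewise as $e^{zT}$, up to an isometric relabelling of indices. First I will check that $\mathbf T = U\otimes T$ is bounded on $\mathcal X$ with $\norm{\mathbf T}=\norm{T}$. Indeed, $\norm{\mathbf TF}_{\mathcal X}^p=\sum_j\norm{Tf_{j-1}}_p^p\le\norm{T}^p\norm{F}_{\mathcal X}^p$. Powers are then easy to describe: $(\mathbf T^nF)_j=T^n f_{j-n}$. It follows that $e^{z\mathbf T}=\sum_n \frac{z^n}{n!}\mathbf T^n$ converges in operator norm, with
\[
(e^{z\mathbf T}F)_j=\sum_{n\ge0}\frac{z^n}{n!}T^nf_{j-n}.
\]

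This formula mixes different coordinates $f_{j-n}$, so I cannot simply apply the bound for $e^{zT}$ coordinatewise. The key step is to remove the shift by a unitary-type conjugation. Since $U$ is a shift, I will use a Fourier/rotation trick. For $\theta\in[0,2\pi)$, define the isometry $M_\theta$ on $\mathcal X$ by $(M_\theta F)_j=e^{ij\theta}f_j$. One checks that $M_\theta\mathbf T M_\theta^{-1}=e^{i\theta}\mathbf T$. This alone does not decouple the coordinates, however.

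A cleaner route is to transfer through $\ell^p(\Z;L^p)$ directly. Write $F=\sum_k \delta_k\otimes f_k$, so that $e^{z\mathbf T}(\delta_k\otimes f)=\sum_n \delta_{k+n}\otimes \frac{z^n}{n!}T^nf$. I expect this to be the main obstacle: bounding the $\ell^p$-sum over $n$ of $\norm{\frac{z^n}{n!}T^nf}_p$ by $e^{|z|}\norm{f}$ requires more than \eqref{eqSK}, and summing over different $k$ interacts.

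The way around it is to exploit the $\ell^p$ structure differently. Replace $U$ by multiplication. By a standard transference, $\ell^p(\Z;L^p(\Omega))$ with the shift is isometrically intertwined with $L^p(\Omega\times\mathbb T)$-valued multiplication only for $p=2$, so I will not use that. Instead I will use the fact that $\mathcal X\simeq L^p(\Omega\times\Z)$ and that $\mathbf T$ is itself of the form (measure-preserving bijection)$\,\circ\,(T\otimes I)$. Precisely, let $\tau$ be the isometry $(\tau F)_j=f_{j-1}$, i.e. $U\otimes I$. Then $\mathbf T=\tau\,(I\otimes T)=(I\otimes T)\,\tau$, since the two factors commute. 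Hence
\[
e^{z\mathbf T}=\sum_n \frac{z^n}{n!}\tau^n (I\otimes T)^n .
\]
Because $\tau$ does not commute out of the sum as a scalar, I will instead use the $\sigma$-finite, product-space Fubini identity \eqref{eqFubini} together with a randomization over the circle. Set $\mathbf T_\theta:=e^{i\theta}(I\otimes T)$. Then
\[
\norm{e^{z\mathbf T_\theta}}=\norm{I\otimes e^{ze^{i\theta}T}}\le C_{SK}(T)e^{|z|},
\]
because $I\otimes S$ has norm $\norm{S}$ on $L^p(\Omega;\ell^p)$ by Fubini. The final step is to write $\tau=\int$-average representations, i.e. to realize $e^{z\mathbf T}$ as a conjugate of $I\otimes e^{zT}$ by an isometry of $L^p(\Omega\times\Z\times\mathbb T)$-type dilation. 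This is the step I am least sure of. If it fails, I will fall back on showing directly that the finite-section matrices $e^{z\mathbf T}$ restricted to coordinates are lower-triangular Toeplitz with entries $\frac{z^n}{n!}T^n$, and bounding them via Lemma~\ref{lemKrivine} only in the positive case $z>0$, extending to complex $z$ by $|e^{z\mathbf T}F|\le e^{|z|\mathbf T}|F|$.
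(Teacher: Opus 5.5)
There is a genuine gap. Your main route cannot be completed, and the step that actually proves the lemma appears only as an unexplained fallback.

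\textbf{The main route fails.} A literal conjugation of $e^{z\mathbf T}$ to $I\otimes e^{zT}$ does not exist. For $T=I$ the latter is the scalar $e^z$, which is similar only to itself, while $e^{z\mathbf T}=e^{zU}\otimes I$ is not scalar for $z\neq0$. What is true for $p=2$ is that the Fourier transform in the $\Z$-variable conjugates $\mathbf T$ to the direct integral of the operators $e^{i\theta}T$ (your $\mathbf T_\theta$). That gives the lemma without positivity, but, as you note, this transference is not isometric for $p\neq2$.

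More fundamentally, your main line never uses positivity of $T$. Positivity is a standing hypothesis in this section, although the lemma does not restate it, and for $p\neq2$ no positivity-free argument can work. The argument of Section 3 uses positivity only through this lemma (for $T$ and $T^*$) and through Lemma~\ref{lemInitialLocal}. The latter can be replaced by the Lubich--Nevanlinna bound $\norm{T^n}=O(\sqrt n)$, which gives starting exponent $1+p/2$; the restriction $\beta<p$ in Lemma~\ref{lemBootstrap} only affects constants. So if the lifting held for every strongly Kreiss bounded operator, every such operator on $L^p$ would have logarithmic power growth. That contradicts the optimality of the exponent $\abs{\frac12-\frac1p}$ recalled in the introduction.

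\textbf{What the fallback is missing.} Your fallback is where the proof lives, but the decisive step is not stated, and as phrased it targets the wrong object. Lemma~\ref{lemKrivine} concerns a single positive operator $S$ on $L^p(\Omega)$ acting on a family of vectors. It cannot be applied to $e^{t\mathbf T}$ as a whole, since each coordinate mixes different powers $T^n$ applied to different $f_{j-n}$. No finite-section or Toeplitz analysis is needed either.

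The missing idea is to apply Krivine termwise, with $S=T^k$, $r=p$ and the family $(f_{j-k})_{j\in\Z}$:
\[
\abs{\mathbf T^kF}_{\ell^p}=\Bigl(\sum_{j\in\Z}\abs{T^kf_{j-k}}^p\Bigr)^{1/p}\leq T^k\abs{F}_{\ell^p}.
\]
Summing with the pointwise triangle inequality for $\abs{\cdot}_{\ell^p}$ gives
\[
\abs{e^{z\mathbf T}F}_{\ell^p}\leq\sum_{k\geq0}\frac{\abs z^k}{k!}T^k\abs{F}_{\ell^p}=e^{\abs z T}\abs{F}_{\ell^p}.
\]
Taking $L^p$-norms and using \eqref{eqFubini} and \eqref{eqSK} at the point $\abs z$ finishes the proof.

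This handles complex $z$ directly, so your reduction to $z>0$ via $\abs{e^{z\mathbf T}F}\leq e^{\abs z\mathbf T}\abs{F}$ is unnecessary. Your rotation identity $M_\theta\mathbf TM_\theta^{-1}=e^{i\theta}\mathbf T$ would also give that reduction, without positivity. However, it does nothing for the real case, which is exactly where positivity and Krivine's calculus are needed.
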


\begin{proof}
By Lemma \ref{lemKrivine}, for every $k\geq0$,
\[
\abs{(U^k\otimes T^k)F}_{\ell^p} 
= \left(\sum_{j\in\Z}\abs{T^k f_{j-k}}^p\right)^{1/p} \leq T^k\abs{F}_{\ell^p}.
\]
Thus,
\[
\abs{e^{z\mathbf T}F}_{\ell^p} 
\leq \sum_{k=0}^\infty \frac{\abs z^k}{k!} \abs{(U^k\otimes T^k)F}_{\ell^p} 
\leq \sum_{k=0}^\infty \frac{\abs z^k}{k!} T^k\abs{F}_{\ell^p} 
= e^{\abs zT}\abs{F}_{\ell^p}.
\]
Taking the $L^p$-norm and applying \eqref{eqFubini} and \eqref{eqSK} yields $\norm{e^{z\mathbf T}F}_{\mathcal X} \leq C_{SK}(T)e^{\abs z}\norm{F}_{\mathcal X}$.
\end{proof}


\subsection{The bootstrap argument}

The following is the key self-improvement step.

\begin{lemma}\label{lemBootstrap}
Suppose that $p>\beta>1$ and that there exists $A>0$ such that for every $N\geq1$ and every $x\in L^p(\Omega)$
\begin{equation}\label{eqHbeta}
\sum_{n=1}^N\norm{T^n x}_p^p
\leq
A N^\beta\norm{x}_p^p.
\end{equation}
Then there exists a constant $D_{p}>0$, depending only on
$p$ and $C_{SK}(T)$, such that for every $N\geq1$ and every $x\in L^p(\Omega)$
\begin{equation*}
\sum_{n=1}^N\norm{T^n x}_p^p
\leq
D_{p}A
N^{(\beta+1)/2}\norm{x}_p^p.
\end{equation*}
\end{lemma}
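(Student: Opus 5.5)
The plan is to prove a \emph{local} estimate on terminal $\sqrt N$-windows of the form
\[
\sum_{N+2-2\sqrt N\leq n\leq N}\norm{T^n x}_p^p\leq D' A N^{\beta/2}\norm{x}_p^p,
\]
and then apply Lemma \ref{lemLocalGlobal} with $\alpha=\beta/2$, which gives exactly the exponent $\beta/2+1/2=(\beta+1)/2$. The point is that \eqref{eqHbeta} will only be used on a block of length $L\asymp\sqrt N$, so it contributes $A L^\beta\asymp A N^{\beta/2}$ instead of $AN^\beta$. The localisation will come from the lifted operator $\mathbf T$ of Lemma \ref{lemLifting} together with the Stirling bound, Lemma \ref{lemStirling}.

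\emph{Construction.} By positivity we may assume $x\geq0$. Fix $N\geq N_0$, put $L:=\lceil 2\sqrt N\rceil$, and define $F=(f_i)_{i\in\Z}\in\mathcal X$ by
\[
f_i:=T^{i+1}x\quad\text{for }0\leq i\leq L-1,\qquad f_i:=0\ \text{otherwise}.
\]
The shift by one avoids the $n=0$ term, which \eqref{eqHbeta} does not control. Then \eqref{eqHbeta} gives
\[
\norm{F}_{\mathcal X}^p=\sum_{n=1}^{L}\norm{T^nx}_p^p\leq AL^\beta\norm{x}_p^p\leq 3^\beta AN^{\beta/2}\norm{x}_p^p .
\]
Since every term is positive, the $j$-th component of $e^{N\mathbf T}F$ is
\[
(e^{N\mathbf T}F)_j=\sum_{k}\frac{N^k}{k!}T^kf_{j-k}=\Bigl(\sum_{j-L+1\leq k\leq j,\ k\geq0}\frac{N^k}{k!}\Bigr)T^{j+1}x .
\]
Now restrict to $j$ with $N+1-2\sqrt N\leq j\leq N$. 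Every $k$ in the range $[j-L+1,j]$ lies in $[N-4\sqrt N,N]$, so Lemma \ref{lemStirling} bounds each coefficient below by $c_{exp}e^N/\sqrt N$. There are about $2\sqrt N$ such $k$, so
\[
(e^{N\mathbf T}F)_j\geq c\,e^N\,T^{j+1}x
\]
pointwise, with an absolute constant $c>0$.

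\emph{Conclusion.} Taking the $\ell^p$-sum over these $j$, then the $L^p$-norm, and using \eqref{eqFubini} and Lemma \ref{lemLifting}, we get
\[
c^pe^{pN}\sum_{N+2-2\sqrt N\leq n\leq N+1}\norm{T^nx}_p^p\leq\norm{e^{N\mathbf T}F}_{\mathcal X}^p\leq C_{SK}(T)^pe^{pN}\norm{F}_{\mathcal X}^p .
\]
Combined with the bound on $\norm{F}_{\mathcal X}^p$, this is the desired local estimate with $D'=3^\beta C_{SK}(T)^pc^{-p}$, and $3^\beta\leq 3^p$ depends only on $p$.

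For the finitely many $N<N_0$ the window contains a bounded number of indices $n\leq N_0$. Each such term is at most $\sum_{m\le N_0}\norm{T^mx}_p^p\leq AN_0^\beta\norm{x}_p^p\le AN_0^p\norm{x}_p^p$ by \eqref{eqHbeta}, so the local estimate holds for these $N$ after enlarging $D'$. Lemma \ref{lemLocalGlobal} then finishes the proof.

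The main obstacle is the index bookkeeping in the construction step. The block length $L$ and the window for $j$ must be chosen so that all $k=j-i$ with $0\leq i<L$ fall in the Stirling range $[N-4\sqrt N,N]$. At the same time there must be roughly $\sqrt N$ of them, so that the $1/\sqrt N$ loss in Lemma \ref{lemStirling} is compensated. The choice $L\approx2\sqrt N$ with the window of length $2\sqrt N$ is exactly what the constant $4$ in Lemma \ref{lemStirling} accommodates.
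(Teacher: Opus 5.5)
Your proposal is correct and follows essentially the paper's argument. You use the same ingredients: a lifted block of the orbit of length $\asymp\sqrt N$, the Stirling lower bound on the coefficients of $e^{N\mathbf T}$ over the terminal window, and then Lemma \ref{lemLocalGlobal} with $\alpha=\beta/2$. The differences are minor. You take block length $\lceil 2\sqrt N\rceil$, so every convolution index already lies in the Stirling range, whereas the paper uses $\lceil 4\sqrt N\rceil$ and restricts the sum to $K_N$. You also handle the finitely many $N<N_0$ explicitly, which the paper leaves implicit.
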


\begin{proof}
Let $x \in L^p(\Omega)$ and fix $N$ sufficiently large, set $m:=\left\lceil4\sqrt N\right\rceil$, and define $Y=(Y_j)_{j\in\Z}\in\mathcal X$ by
\[
Y_j
=
\begin{cases}
T^j x,&1\leq j\leq m,\\
0,&\text{otherwise}.
\end{cases}
\]
By \eqref{eqHbeta},
\begin{equation*}
\norm{Y}_{\mathcal X}^p = \sum_{j=1}^m\norm{T^j x}_p^p \leq A m^\beta\norm{x}_p^p \le 5^{p} A N^{\beta/2}\norm{x}_p^p.
\end{equation*}
Let $W:=e^{N\mathbf T}Y$. By Lemma \ref{lemLifting},
\begin{equation}\label{eqWUpper}
\norm{W}_{\mathcal X}^p \le 5^{p}C_{SK}(T)^p A e^{pN}N^{\beta/2}\norm{x}_p^p.
\end{equation}
For an integer $n$ in the interval $N+2-2\sqrt N\leq n\leq N$, we have $W_n = \sum_{k\geq0} \frac{N^k}{k!}T^kY_{n-k}$. Whenever $1\leq n-k\leq m$, $T^kY_{n-k}=T^n x$, and therefore
\begin{equation}\label{eqCoordinateW}
W_n = T^n x \left(\sum_{\substack{k\geq0\\1\leq n-k\leq m}} \frac{N^k}{k!}\right).
\end{equation}
Let $K_N := \left\{ k\in\N_0: N-4\sqrt N\leq k\leq N-2\sqrt N \right\}$. For $N$ sufficiently large, if $N+2-2\sqrt N\leq n\leq N$ and $k\in K_N$, then $1\leq n-k\leq m$. Moreover, $|K_N| \ge \sqrt N$, and by Lemma \ref{lemStirling}, $\frac{N^k}{k!} \geq c_{exp}\frac{e^N}{\sqrt N}$ for $k\in K_N$. Consequently,
\[
\sum_{\substack{k\geq0\\1\leq n-k\leq m}} \frac{N^k}{k!} \geq \sum_{k\in K_N}\frac{N^k}{k!} \geq c_{exp}e^N.
\]
It follows from \eqref{eqCoordinateW} that $\norm{W_n}_p \geq c_{exp}e^N\norm{T^n x}_p$. Hence
\begin{equation}\label{eqWLower}
\norm{W}_{\mathcal X}^p \geq c_{exp}^pe^{pN} \sum_{N+2-2\sqrt N\leq n\leq N} \norm{T^n x}_p^p.
\end{equation}
Combining \eqref{eqWUpper} and \eqref{eqWLower} yields $\sum_{N+2-2\sqrt N\leq n\leq N} \norm{T^n x}_p^p \le 5^p\left( \frac{C_{SK}(T)}{c_{exp}}\right)^p A N^{\beta/2}\norm{x}_p^p$. Applying Lemma \ref{lemLocalGlobal} with $\alpha=\beta/2$ gives
\[
\sum_{n=1}^N\norm{T^n x}_p^p \le D_p A N^{(\beta+1)/2}\norm{x}_p^p,
\]
with $D_p := 2^{p+1}5^p\left( \frac{C_{SK}(T)}{c_{exp}}\right)^p$.
\end{proof}

Repeated application of the preceding lemma reduces the exponent to $1$ at the cost of an additional logarithmic factor.

\begin{prop}\label{propOrbit}
There exist constants $C_p,\kappa_p>0$, depending only on $p$ and
$C_{SK}(T)$, such that
\begin{equation}\label{eqOrbitFinal}
\sum_{n=1}^N\norm{T^n x}_p^p
\leq
C_pN\bigl(\log(N+2)\bigr)^{\kappa_p}
\norm{x}_p^p
\end{equation}
for every $N\geq1$ and $x\in L^p(\Omega)$.
\end{prop}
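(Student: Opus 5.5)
The plan is to iterate Lemma \ref{lemBootstrap}, starting from Corollary \ref{corInitialGlobal}, and to stop after a number of steps that depends on $N$ and is of order $\log\log N$. First, set $\beta_0:=(p+1)/2$ and $A_0:=C$, the constant of Corollary \ref{corInitialGlobal}. Since $p>1$, we have $1<\beta_0<p$. Then define recursively
\[
\beta_{j+1}:=\frac{\beta_j+1}{2},\qquad A_{j+1}:=D_pA_j .
\]
We may enlarge $D_p$ so that $D_p\geq1$. The recursion gives $\beta_j-1=(\beta_0-1)2^{-j}=\frac{p-1}{2^{j+1}}$, so every $\beta_j$ lies in $(1,p)$. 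By induction on $j$, Lemma \ref{lemBootstrap} applies at every stage: for every $j\geq0$, every $N\geq1$ and every $x\in L^p(\Omega)$,
\[
\sum_{n=1}^N\norm{T^nx}_p^p\leq D_p^{\,j}A_0\,N^{1+\frac{p-1}{2^{j+1}}}\norm{x}_p^p .
\]
The key point is that $D_p$ depends only on $p$ and $C_{SK}(T)$, and not on $\beta$. So the constants grow exactly geometrically in $j$.

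Second, for a fixed $N\geq1$, choose $j=j(N):=\lceil\log_2\log(N+2)\rceil$. Then $2^{j}\geq\log(N+2)$, and hence
\[
N^{\frac{p-1}{2^{j+1}}}\leq \exp\Bigl(\tfrac{(p-1)\log N}{2\log(N+2)}\Bigr)\leq e^{(p-1)/2}.
\]
Also $j\leq \log_2\log(N+2)+1$, so
\[
D_p^{\,j}\leq D_p\,(\log(N+2))^{\log_2 D_p}.
\]
Substituting both bounds into the displayed estimate yields \eqref{eqOrbitFinal} with $\kappa_p:=\max\{\log_2 D_p,\,1\}$ (taken positive) and $C_p:=A_0D_pe^{(p-1)/2}$. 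Both constants depend only on $p$ and $C_{SK}(T)$.

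The only delicate point is bookkeeping around Lemma \ref{lemBootstrap}. That lemma is proved for ``$N$ sufficiently large'' (namely $N\geq N_0$ and large enough for the window inclusions). One must check that this threshold does not depend on $\beta$ or on $j$, which holds because it comes only from Lemma \ref{lemStirling} and elementary inequalities. The finitely many remaining small $N$ are then handled uniformly. Indeed, for $N<N_1$ the trivial bound $\sum_{n\le N}\norm{T^n}^p\le N\max_{n<N_1}\norm{T^n}^p$ holds, and by Lubich--Nevanlinna, or simply by Corollary \ref{corInitialGlobal}, the factor $\max_{n<N_1}\norm{T^n}^p$ is bounded by a constant depending only on $p$ and $C_{SK}(T)$. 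This can be absorbed into $D_p$, so the lemma holds for all $N$, as stated. Once this uniformity is checked, the iteration above is routine.
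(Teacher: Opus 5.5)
Your proposal is correct and is essentially the paper's proof: iterate Lemma~\ref{lemBootstrap} starting from Corollary~\ref{corInitialGlobal} along $\beta_j=1+\frac{p-1}{2^{j+1}}$ with constants $D_p^{\,j}$, and stop at $j\approx\log_2\log N$ (the paper takes $j=\lfloor\log_2\log(N+1)\rfloor$ for $N\geq2$, while your $\lceil\log_2\log(N+2)\rceil$ also covers $N=1$). Your attention to the ``$N$ sufficiently large'' threshold goes beyond what the paper writes, but Lemma~\ref{lemLocalGlobal} needs the window estimate for every $N\geq1$, so the small-$N$ patch belongs at the window level rather than only on the global sum; the cleanest fix bounds small windows directly by \eqref{eqHbeta}, as $AN^{\beta}\leq AN_1^{p/2}N^{\beta/2}$ for $N<N_1$, which keeps the constant proportional to $A$ and independent of $\beta$.
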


\begin{proof}
Let $N\ge 2$ and set $\beta_0:=\frac{p+1}{2}$ and $\beta_{j+1}:=\frac{\beta_j+1}{2}$, so that $\beta_j = 1+\frac{p-1}{2^{j+1}}$. Corollary \ref{corInitialGlobal} gives the desired estimate for $j=0$. Inspecting the constants in Lemma \ref{lemBootstrap} and Lemma \ref{lemLocalGlobal} yields the existence of $C>0$ such that
\begin{equation}\label{eqIteration}
\sum_{n=1}^N\norm{T^n x}_p^p \leq C D_p^j N^{1+\frac{p-1}{2^{j+1}}}\norm{x}_p^p
\end{equation}
for every $j\geq0$ and $x \in L^p(\Omega)$. Choosing $j = \lfloor\log_2\log(N+1)\rfloor$ gives $2^j\leq\log(N+1)<2^{j+1}$, from which $N^{\frac{p-1}{2^{j+1}}} \leq e^{p-1}$ and $D_p^j \leq (\log(N+1))^{\log_2 D_p}$. Inserting these bounds into \eqref{eqIteration} yields \eqref{eqOrbitFinal}. 
\end{proof}


\subsection{Conclusion}

Since $T$ is positive and strongly Kreiss bounded on $L^p(\Omega)$, its adjoint $T^*$ on $L^{p'}(\Omega)$ is also positive and satisfies $\|e^{zT^*}\| = \|e^{zT}\| \leq C_{SK}(T)e^{|z|}$.
Applying Proposition \ref{propOrbit} to $T$ and $T^*$ yields constants $C_p, C_{p'}, \kappa_p, \kappa_{p'} > 0$ such that, for all $N \ge 1$,
\[
\sum_{n=1}^N \|T^n x\|_p^p \leq C_p N (\log(N+1))^{\kappa_p} \|x\|_p^p
\quad \text{and} \quad
\sum_{n=1}^N \|(T^*)^n y\|_{p'}^{p'} \leq C_{p'} N (\log(N+1))^{\kappa_{p'}} \|y\|_{p'}^{p'}.
\]
Fix $N \geq 2$ and unit vectors $x \in L^p(\Omega)$, $y \in L^{p'}(\Omega)$. Using $\langle T^N x, y \rangle = \langle T^k x, (T^*)^{N-k} y \rangle$ for $1 \leq k \leq N-1$ together with Hölder's inequality, we get
\begin{align*}
(N-1)|\langle T^N x, y \rangle|
&\leq \sum_{k=1}^{N-1} \|T^k x\|_p \|(T^*)^{N-k} y\|_{p'}
\leq \left( \sum_{k=1}^{N-1} \|T^k x\|_p^p \right)^{1/p} \left( \sum_{k=1}^{N-1} \|(T^*)^k y\|_{p'}^{p'} \right)^{1/p'} \\
&\leq C_p^{1/p} C_{p'}^{1/p'} N (\log(N+1))^{\frac{\kappa_p}{p} + \frac{\kappa_{p'}}{p'}}.
\end{align*}
Dividing by $N-1$ and taking the supremum over unit vectors gives the existence of $C>0$ such that for every $N\ge1$,
\[
\|T^N\| \leq C (\log(N+2))^\kappa
\]
with $\kappa = \frac{\kappa_p}{p} + \frac{\kappa_{p'}}{p'}.$


\section{Remarks}

\begin{remark}[Relation with the Banach lattice results of Deng--Lorist--Veraar]
The argument above is strongly lattice-theoretic. In particular,
Lemma \ref{lemKrivine} is valid on arbitrary Banach lattices and
does not depend on the particular representation of $L^p$.

The specifically $L^p$ ingredient is the exact identity
\[
\norm{
\left(\sum_j\abs{x_j}^p\right)^{1/p}
}_p^p
=
\sum_j\norm{x_j}_p^p,
\]
which is used both in the lifting argument and in the bootstrap.
Thus the same proof extends, with the appropriate constants, to
Banach lattices for which the corresponding upper and lower
$\ell^r$-lattice estimates are simultaneously available, together
with the dual estimates on $X^*$.

It would be interesting to determine the optimal formulation of this
argument solely in terms of lattice convexity and concavity. This is
closely related to the framework of Deng, Lorist and Veraar
\cite[Section 4.1]{DLV}, who proved polynomial-logarithmic bounds for
positive strongly Kreiss bounded operators on $p$-convex and
$q$-concave Banach lattices.
\end{remark}

\begin{remark}
Theorem \ref{thmMain} answers
\cite[Question 5.8]{ArnoldCuny} affirmatively. It also implies that,
for a positive strongly Kreiss bounded operator on $L^p$,
the infimum of the exponents $\theta\geq0$ for which
\[
\norm{T^N}=O(N^\theta)
\]
is equal to zero, thereby answering \cite[Problem 5.6]{DLV} at the
level of the polynomial scale.
\end{remark}


\end{document}